\theoremstyle{plain}
\newtheorem{thm}{Theorem}
\newtheorem{prp}[thm]{Proposition}
\newtheorem{cor}[thm]{Corollary}
\theoremstyle{definition}
\newtheorem{dfn}[thm]{Definition}  
\newtheorem{xmp}[thm]{Example}
\newtheorem{rem}[thm]{Remark}
\newcommand\C{{\mathbb C}}
\newcommand\E{{\mathbb E}}
\newcommand\N{{\mathbb N}}
\newcommand\R{{\mathbb R}}
\newcommand\cA{{\mathcal A}}
\newcommand\cB{{\mathcal B}}
\newcommand\cC{{\mathcal C}}
\newcommand\cD{{\mathcal D}}
\newcommand\cE{{\mathcal E}}
\newcommand\cF{{\mathcal F}}
\newcommand\cH{{\mathcal H}}
\newcommand\cK{{\mathcal K}}
\newcommand\cL{{\mathcal L}}
\newcommand\cM{{\mathcal M}}
\newcommand\cP{{\mathcal P}}
\newcommand\cR{{\mathcal R}}
\newcommand\cS{{\mathcal S}}
\newcommand\cW{{\mathcal W}}
\newcommand\cX{{\mathcal X}}
\newcommand\cY{{\mathcal Y}}
\newcommand\ot{\otimes}
\newcommand\hot{\widehat{\otimes}}
  \newcommand\g{\gamma}
  \newcommand\G{\Gamma}
  \newcommand\s{\sigma}
        \renewcommand\a{\alpha} 
        \renewcommand\b{\beta} 
        \renewcommand\d{\delta}
        \renewcommand\l{\lambda}
        \renewcommand\L{\Lambda}
        \newcommand\e{\epsilon}
        \newcommand\Om{\Omega}
        \newcommand\om{\omega}
\newcommand\xchi{\smash{\raise 0.5ex\hbox{$\chi$}}}
\newcommand\f[1]{{\mathfrak #1}}
\numberwithin{equation}{section}
\title{The Cotlar-Stein Lemma, Grothendieck's Inequality and All That}
\author{Brian Jefferies}
\address{School of Mathematics\\ The University of New South Wales\\ NSW 2052
AUSTRALIA}
\email{b.jefferies@unsw.edu.au}
\date{\today}
\subjclass{Primary 81S40, 58D30; Secondary 46G10, 28B05}
\keywords{}
\begin{document}

\begin{abstract} The purpose of this paper is point out connections between scattering theory,
double operator integrals, Krein's spectral shift function, integration theory, bimeasures,
Feynman path integrals, harmonic and functional analysis and many other applications to quantum physics
made since the last 50 ears or so. 

The starting point is Kluvanek's \textit{Integration Structures} which he
hoped to apply to quantum physics and is now bearing fruit from the contributions of many
authors, especially former Soviet mathematical physicists in the intervening years. Soon, a practical quantum field theory in four space-time dimensions
satisfying the Wightman axioms may be proved to exist. This is the aim of one of the Clay Prizes.
At the moment, only toy models exist in fewer that four space-time dimensions.
\end{abstract}

\maketitle
\baselineskip 16pt

\tableofcontents



\setcounter{page}{1}

\section{Integration Structures}
Let $(\Om,\cS,\mu)$ be a measure space, or more generally, let $\mu:\cS\to\C$ be a $\s$-additive
set function defined on a $\d$-ring of subset $\cS$ of $\Om$. In either case, we have the bound
$$\left|\int_\Om f\, d\mu\right| \le \|f\|_{L^1(|\mu|)}$$
for all $\mu$-integrable functions $f$. In case $\mu$ is scalar valued, $|\mu|$ is the variation of $\mu$.
On the other hand, if $\mu(E) \in \{0,\infty\}$ for all $E\in \cS$, then it is well known that $L^1(\mu) = \{0\}$. We say that the norm $\|\cdot\|_{L^1(\mu)}$ is 
\textit{integrating for $\mu$}.

I. Kluvanek generalised this concept in the following direction \cite{Klu}. Let $\cK$ be some family scalar valued functions
defined on a set nonempty set $\Om$ with the zero function $0\in\cK$. A \textit{gauge $\rho$ on $\cK$} is a function $\rho:\cK\to [0,\infty)$ such that $\rho(0) = 0$.
Let $\cR$ be a family of gauges on $\cK$. Then $\cR$ is said to be \textit{integrating for a map $\mu:\cK\to\C$} if the following holds:
\begin{itemize}
\item[a)] $f\in\cK$ and there exists $c_j\in\C$, $f_j\in\cK$, $j=1,2,\dots$,
$f(\om)=\sum_{j=1}^\infty c_jf_j(\om)$ for all $\om\in\Om$ such that $\sum_{j=1}^\infty |c_j|.|f_j(\om)| < \infty$
\item[b)] $\sum_{j=1}^\infty |c_j|.\rho(f_j) < \infty$ for all $\rho\in\cR$,
\item[c)] for every $\rho\in \cK$, there exists $C_\rho > 0$ such that $|\mu(f)| \le C_\rho\sum_{j=1}^\infty |c_j|.\rho(f_j)$.
\end{itemize}

\begin{dfn} If $\cR$ satisfies just a) and b) on $\cK$, then the quasicomplete lcs $L^1(\cK,\cR)$ consists of all $\cR$-equivalence classes
of functions $f:\Om\to\C$ such that a) and b) hold. The collection sim$(\cK)$ of all finite linear combinations of functions from $\cK$
is densely embedded in $L^1(\cK,\cR)$ \cite{Klu}. The theory mimics Lebesgue, Daniell and Stone integration \cite{Stone} quite closely but is richer, as the following examples show.
\end{dfn}

\begin{xmp} i) Let $\mu$ be Lebesgue measure on $\R$ and $\cK=\{\xchi_I: I = [a,b), a,b\in\R,\ a\le b\}$.
The length of an interval $I$ is denoted by $|I|$. Then $\rho = |\cdot|$ defines the gauge $|\cdot|:\cK\to [0,\infty)$ by
$|\xchi_I|=|I|$ that is integrating for $\mu$ on $\cK$. For any Lebesgue measurable function $f:\Om\to[0,\infty]$, we have
$$\int_\Om f\,d\mu = \inf\left\{\sum_{j=1}^\infty |c_j|.|I_j|: f(\om)=\sum_{j=1}^\infty c_j\xchi_{I_j}(\om)\ \forall
\om\in\Om\text{ with }\sum_{j=1}^\infty |c_j|.|f_j(\om)| < \infty\right\},$$


If the $c_j$, $j=1,2,\dots$ are elements of a Banach space $\cX$,
then we obtain the \textit{Bochner integral} \cite{Mik}.

ii) Let $E$ be a quasicomplete lcs (bounded sets are complete in the associated uniformity), $(\Om,\cS)$ a measurable space and $m:\cS\to E$ a countably additive measure on the $\s$-algebra $\cS$.
Then $m$ has a family $\Delta(m)$ of finite nonnegative measures that is integrating for every scalar measure $\langle m,\xi\rangle$, $\xi\in E'$
(Bartle-Dunford-Schwartz Theorem). 

Furthermore, for every continuous seminorm $p$ on $E$, there exists 
$C_p >0$ and $\mu_p\in \Delta$, such that $p(m)(S) \le C_p\mu_p(S)$ for all $S\in\cS$. Here
$$p(m)(S) =\sup_{\xi\in U_p^\circ}|\langle m,\xi\rangle|(S), \quad S\in\cS,$$
for $U_p =\{x\in E:p(x)< 1\}$ is the $p$-semivariation of $m$ and for any $m$-integrable function $f:\Om\to\C$ the inequalities
$$p\left(\int_S f\,dm\right)\le  C_p\mu_p(|f|)$$
obtain so that the collection $\Delta(m)$ is integrating for $m$. 

The lcs $L^1(m)$
of $m$ integrable functions is endowed with the topology induced by the continuous seminorms $f\mapsto p(m)(f)$
as $p$ ranges over the continuous seminorms of $E$. Unsurprisingly, if $E$ is a Fr\'echet space or Banach space, then so is $L^1(m)$.
The space $L^1(m)$ is quasicomplete exactly when the $\Delta(m)$-measure algebra of $\cS$ is \textit{Dedekind complete}.
It has only recently been verified by R. Becker, R. Ricker and S. Okada,  see \cite{RO}, 
that this occurs 
precisely when $m$ is totally absolutely
continuous with respect to a single localisable measure $\lambda$, that is, one for which $L^1(\l)$ is itself Dedekind complete.
The construction is due to pioneering work of I. Kluvanek 
on conical measures.
There was a gap in the proof for $\R^I$ filled by R. Becker. 

iii) For a spectral measure $P$ acting on a Banach space $\cX$, the gauge $\rho:f\mapsto \|P(|f|)\|_{\cL(\cX)}$, is an integrating
norm for $P$ so that $L^1(\rho) = L^1(P)$ as vector spaces \cite{Klu}. However, the usual topology of $L^1(P)$ as in ii) above is not a norm topology if $\cX$ is infinite dimensional, because $P$ is usually only $\s$-additive in the strong operator topology, except in matrix theory.
\end{xmp}

\begin{xmp} (One Dimensional Brownian Motion) Let $\Om = C([0,1])$, 
the continuous functions on the interval $[0,1]$. Let $X_t:\Om\to\R$ be the evaluation functions $X_t(\om) = \om(t)$
for $0\le t\le 1$ and $\om\in\Om$. Let $m([a,b))= X_b-X_a$ for all $0\le a\le b\le t$.

We seek a Borel probability measure $P$ on $\Omega$ with expectation operator $\E$  for which
$$\E|m([a,b))|^2 =b-a.$$
Then $m:\cB([0,1])\to L^2(P)$ is an \textit{orthogonally scattered vector measure} in the sense that
$$\lambda(A\cap B)= \int_\Om m(A)m(B)\,dP = 0\text{ if }A\cap B=\emptyset,\ A,B\in \cB([0,1]).$$
Moreover $\|m(A)\|^2 \le \lambda(A)$, the Lebesgue measure of $A\in \cS$ and 
$$\left\|\int_A f\,dm\right\|\le \lambda(|f|.\xchi_A)^\frac12,\quad A\in \cS$$
for $f\in L^1(m)$, so that $\lambda^{\frac12}$ is \textit{integrating for $m$} on \textit{subintervals} of $[0,1]$. 

The random variable
$\int_0^1 f\,dm \in L^2(P)$ is called the \textit{It\^o integral} of $f\in L^2([0,1])$ and the total integation map is the \textit{It\^o isometry}. The random variable $\int_0^1 f\,dm$ is usually written as $\int_0^1 f\,dX$ 
despite the observation that, with probability one, the continuous process $X$ has unbounded variation on every 
nonempty interval so that $dX$ cannot be a pointwise $\s$-additive with values in the random variables
$L^0(P)$.

A. Kolmogorov demonstrated the existence of a Baire probability measure $\tilde P$ on $\R^{[0.1]}$ with these properties
and N. Wiener proved that a probability measure $P$ with $P(E\cap \Om) = \tilde P(E)$ exists for cylinder sets $E$ contained in $\R^{[0.1]}$.
It follows that the quadratic variation process
$$[X]_t= \lim_{\cP_t}\sum_{[a,b)\in \cP_t} |X_a-X_b|^2$$
converges in probability $P$ over all partitions $\cP_t$ of $[0,t)$ and $\E[X]_t = t$.
The vector measure $m$  does however have a density $\Phi:[0,1]\to\cS'$ with respect to $\lambda$, with values in distributions $\cS'$.
The process $\Phi$ is usually called \textit{White Noise}.
\end{xmp}  

\section{Bimeasures}

Let $\cX,\ \cY$ be lcs and $(\L,\cE)$, $(\G,\cF)$ measurable spaces.
We write the semi-algebra of product sets $E\times F$, $E\in\cE$ and $F\in\cF$ as $\cE\times\cF$ 
and the algebra it generates as $\cE\ot \cF$.
The corresponding \textit{$\s$-algebra} is $\cE\hot \cF$. The space of continuous linear operators $u:\cX\to \cY$
is denoted by $\cL(\cX,\cY)$. We always give the vector space $\cL(\cX,\cY)$ or $\cL(\cX) := \cL(\cX,\cX)$
the topology of \textit{strong convergence} generated
by the seminorms $u\mapsto p(ux)$ for any $x\in\cX$ and continuous seminorm $p:\cY\to \R_+$.
Then $\cX$ is identical to the lcs $\cL(\C,\cX)$.

A separately
$\s$-additive set function $m:\cE\times\cF\to \cL(\cX)$
is called a \textit{bimeasure}, that is,
$$m(E\times F) =\sum_{j=1}^\infty m(G_j)$$
for $G_j\in \cE\ot \cF$, $j=1,2,\dots$, pairwise disjoint and $E\times F= \bigcup_{j=1}^\infty G_j$. 
This is the same as requiring the two set functions
$$E\longmapsto m(E\times F)\text{ and }F\longmapsto m(E\times F),\quad  E\in\cE,\ F\in\cF$$
to be $\s$-additive \cite{Klu}. It is well known that a bimeasure on $\cE\times\cF$ may not be 
the restriction of a measure on 
the $\s$-algebra $\cE\hot\cF$.

\begin{xmp}\label{xmp:1}
 Let $\varphi\in\ell^2$ and $m(E\times F) = \sum_{k\in F}\left( \sum_{j\in E} \varphi(j)e^{i\pi j}\right)\varphi(k)e^{-i\pi k}$, the convergence being in $\ell^2$ as Fourier series,
 \begin{equation}\label{eqn:1}
|m(E\times F)|\le \|\varphi\|_2^2,\quad E,F\subseteq \N.
\end{equation}
and by linearity, $|m(f\ot g)|\le \|\varphi\|_2^2\|f\|_\infty\|g\|_\infty$
for simple functions $f,g$ on $\N$.

But any additive extension of $m$ is unbounded on the $\s$-algebra $\cE\hot \cF$.
\end{xmp}

Extending to functions, we see that
$$\int_{\L\times\G} f\ot g\,dm = m(f\ot g),\quad f \in L^\infty(\L),\ g \in L^\infty(\G),$$
in the limit. There exist $C>0$ such that $|m(f\ot g)|\le C\|f\|_\infty\|g\|_\infty$
in the scalar case or for any continuous seminorm $p$ in the vector case., there exists $C_p > 0$ such that 
$$p\big(m(f\ot g)\big)\le C_p\|f\|_\infty\|g\|_\infty.$$
Bimeasures were examined in a series of papers by M. Morse and Transue.

A function $\varphi $ belonging to the projective tensor product $L^\infty(\L)\hot_\pi L^\infty(\G)$
has a uniform  expansion $\varphi =\sum_{j=1}^\infty f_j\ot g_j$
with $\sum_{j=1}^\infty \|f_j\|_{\infty} \|g_j\|_{\infty} < \infty,$
so 
$$\int_{\L\times\G} \varphi\,dm = \sum_{j=1}^\infty m(f_j\ot g_j).$$
Not much more can be said without additional assumptions.

On the other hand, in Example \ref{xmp:1}, the representation

\begin{equation}\label{eqn:Peller_rep}
\psi(j,k) = \sum_{t=1}^\infty \a(j,t)\b(k,t),\quad j,k=1,2,,\dots,
\end{equation}
enables us to write
$$\int_{\N\times\N} \psi\,dm = 
\sum_{t=1}^\infty m\big(\a(\cdot,t)\ot\b(\cdot,t)\big).$$
It can happen that $\int_{\N\times\N} |\psi|\,d|m| =\infty$
because $|m|(E\times F) =\sum_{j\in E,k\in F}|\varphi(j)||\varphi(k)|$.

Applying inequality (\ref{eqn:1}), we have
$$\left|\sum_{t=1}^\infty m\big(\a(\cdot,t)\ot\b(\cdot,t)\big)\right|
\le\|\varphi\|_2^2\sum_{t\in T} \|\a(\cdot,t)\|_{\ell^\infty}\|\b(\cdot,t)\|_{\ell^\infty},$$
so $\sum_{t\in T} \|\a(\cdot,t)\|_{\ell^\infty}\|\b(\cdot,t)\|_{\ell^\infty}<\infty$
ensures that $\psi$ is integrable.

For $\psi$ represented by formula (\ref{eqn:Peller_rep}), let us define
\begin{align}
\|[\psi]\|_{L^1_G(m)}
= \left\|\left(\sum_{t\in\N}  |\a(\cdot,t)|^2\right)^\frac12\right\|_{L^\infty(\N)}
\left\|\left(\sum_{t\in\N} |\b(\cdot,t)|^2\right)^\frac12\right\|_{L^\infty(\N)}.\label{eqn:Peller_norm}
\end{align}

We see that the inequality
$$\left|\int_{\N\times\N} \psi\,dm\right| \le \|[\psi]\|_{L^1_G(m)}$$
characterises $m$-integrability, even though $m$ is not a genuine $\s$-additive measure. This is essentially \textit{Grothendieck's inequality} established  in 1958 (see \cite{Pis}).

\begin{dfn} The smallest positive number $K_G$  for which
$$\sum_{t\in T} \|\a(\cdot,t)\|_{\ell^\infty}\|\b(\cdot,t)\|_{\ell^\infty} 
\le K_G\left\|\left(\sum_{t\in\N}  |\a(\cdot,t)|^2\right)^\frac12\right\|_{\ell^\infty}
\left\|\left(\sum_{t\in\N} |\b(\cdot,t)|^2\right)^\frac12\right\|_{L^\infty(\N)}$$
is called \textit{Grothendieck's constant}.
\end{dfn}
Consequently, $\|[\psi]\|_{L^1_G(m)} < \infty$ ensures that 
$\psi$ is $m$-integrable and the Banach function space norm $[\psi]\mapsto\|[\psi]\|_{L^1(m)}$
is integrating for the simple scalar bimeasure $m$, as is required for a decent 
theory of integration in the sense of Lebesgue and modern Harmonic Analysis.

It turns out that the essential property of the simple  scalar bimeasure $m$ above
is that it is a well defined measure on compact sets, that is to say, on finite sets.

\begin{dfn} Let $\L$, $\G$ be Hausdorff topological spaces
with Borel $\s$-algebras $\cE=\cB(\L)$, $\cF=\cB(\G)$.

A separately
$\s$-additive set function $m:\cE\times\cF\to \cL(\cX)$
is called a \textit{regular (Radon) bimeasure} if
\begin{itemize}
\item[a)] $m(\cdot,F)$ and $m(E,\cdot)$ are compact inner regular
for every $E\in\cE$ and $F\in\cF$, and
\item[b)] $E\times F\mapsto m\big((E\cap K_1)\times(F\cap K_2)\big)$
$E\in\cE$, $F\in\cF$ is the restriction to $\cE\times\cF$ of a finite Radon measure on $\L\times\G$ for each compact set $K_1,\ K_2$.
\end{itemize}
By virtue of Bourbakist principles, the integral $E\times F\mapsto \int_{E\times F}\varphi\, dm$ is defined to be the unique regular bimeasure $\varphi.m$ equal to $\varphi.m$ on compact sets, that is, on
$$(\cE\cap K_1)\ot  (\cF\cap K_2)$$
for all compact sets $K_1,K_2$. The product $\varphi.m$ of $\varphi$ with $m$, the indefinite integral 
$$\int\varphi\,dm:E\times F\mapsto \int_{E\times F} \varphi\,dm := (\varphi.m)(E\times F),\quad E\in\cE,\ F\in\cF,$$
is itself a regular bimeasure as one would hope.
\end{dfn}
With no topology, we can just use a compact class $\cK_j$ of sets $K_j$, $j=1,2$. \cite{Klu}.
Even if $\cX$ is a Banach space, the Banach function space $\{[\psi]:\|[\psi]\|_{L^1_G(m)} < \infty\}$
need not be the optimal space of $m$-integrable functions because we deal with the strong operator
topology of $\cL(\cX)$ rather then the \textit{uniform} operator topology
of uniform convergence on the closed unit ball of $\cX$. In this setup,
the Cotlar-Stein Lemma to be mentioned later characterises $L^1(m)$ and its integrating norm topology
when $m=QP$ for spectral measures $Q$ qnd $P$.

Let $\cM(\cF)$ denote the scalar valued measures on the $\s$-algebra $\cF$ with the total variation norm.
Then for a scalar bimeasure $m$ the set function $E\mapsto m(E,\cdot)$, $E\in\cE$, is a measure-valued measure
that is $\s$-additive for the topology of setwise convergence, equivalent to the weak topology $\s(\cM(\cF),\cL^\infty(
G))$ on $\cM(\cF)$ by uniform convergence. It follows that the \textit{semivariation norm}
$$\|m\| =\sup\{|m(f\ot g)|: \|f\|_\infty \le 1,\ \|g\|_\infty \le 1\}$$
is a norm on the space of scalar bimeasures as well as for $\cL(X)$-valued bimeasures.
The corresponding space is $L^1(\|m\|(\,\cdot\,))= L^\infty(Q)\hot_\pi L^\infty(P)$.

For a regular bimeasure $m:\cE\times\cF\to \cL(\cX)$, we shall determine $L^1(m)$ and its locally convex topology $\tau_1(m)$.
The semivariation norm $\|\cdot\|$ determines a norm topology on $L^1(m)$ that is strictly stronger than
$\tau_1(m)$ if $\cK$ is infinite dimensional. The situation is a standard feature of vector measure theory and
generalises to polymeasures.

The natural choice for $\tau_1(m)$ is to take fundamental compact classes $\cK_j$, $j=1,2$ and take 
the topology generated by $L^1$-topology for the strong operator valued measures 
$$\{(\xchi_{K_1}\ot \xchi_{K_2}).m:K_j\in\cK_j,\ j=1,2\}$$
using the inner regularity of $m$ together with the semivariation norms $\|mx\|$ for $x\in\cX$. 
The topology first described in \cite{JefRick} was motivated by quantum mechanics and the bimeasure $QP$ described below.
\section{Spectral Measures}

We now assume that $Q,P$ are regular spectral measures and $m=QP$, that is,
$$m(E\times F) = Q(E)P(F),\quad E\in\cE,\ F\in\cF.$$

A special role is played by the position operator $Q(E):\psi\mapsto \xchi_E.\psi$, $E\in\cB(\R^d)$,
$\psi\in L^2(\R^d)$ in quantum mechanics in dimension $d=1,2,\dots$\,. For the unitary Fourier transform
$$\f{F}: L^2(\R^d)\to L^2(\R^d)$$
given by 
$$\f{F}f(\xi) =\hat f(\xi) := {(2\pi)^{-d/2}}\int_{\R^d}e^{-i\langle\xi, x\rangle}f(x)\, dx,\quad \xi\in\R^d,$$
for $f \in L^1(\R^d)$, we let $P(E) = \f{F}^*Q(E)\f{F}$, $E\in\cB(\R^d)$, be the corresponding momentum operator (``Questions")
and $P := \f{F}^*Q\f{F}:E\mapsto \f{F}^*Q(E)\f{F}$, $E\in\cB(\R^d)$, the spectral measure of the position operator
in quantum mechanics over $L^2(\R^d)$. Then the definite integral
$$\int_{\R^d\times\R^d}\s\,d(QP): L^2(\R^d)\to L^2(\R^d)$$
is the pseudodifferential operator with symbol $\s:\R^d\times \R^d\to\C$ given by
$$\left(\int_{\R^d\times\R^d}\s\,d(QP)\psi\right)(x) = {(2\pi)^{-d/2}}\int_{\R^d}e^{i\langle x,\xi\rangle}\s(x,\xi)\hat \psi(\xi)\,d\xi,\quad\psi\in L^2(\R^d),$$
when $\s$ is rapidly decreasing. The Cotlar-Stein result characterises the class $L^1(QP)$ of symbols $\s$ determining a bounded
pseudodifferential operator $\int_{\R^d\times\R^d}\s\,d(QP)$ on $L^2(\R^d)$, although it is usually not formulated in this manner.
It is easy to check the basic operator equality
$$Q(E)\left(\int_{\R^d\times\R^d}\s\,d(QP)\right)P(F) = \int_{E\times F}\s\,d(QP),\quad E,\, F\in\cB(\R^d).$$
A similar treatment applies to the space $L^1(PQ)$. Below we shall see that
the classical Weyl quantisation procedure is not a bimeasure $W$, but it is finitely additive.
This is actually a difficult problem in harmonic analysis and singular integral operators.
The question is resolved by checking whether or not $\|W(g,E)\| = \|W(E,g)\|$
is bounded uniformly for all finite unions $E$ of intervals and all uniformly bounded
sets of measurable functions $g$.

\section{$L^1(QP)$ and the Cotlar-Stein Lemma}

The argument in the last section suggests that to compute the indefinite integral $\varphi.(QP)$, 
if it exists at all, it suffices to show that the operator $(QP)(\varphi)\in \cL(\cH)$ exists, a standard feature of spectral theory
in its many guises. The Cotlar-Stein Lemma allows us to compute $(QP)(\varphi)$ and hence, $L^1(QP)$ with its topology.

The Cotlar-Stein Lemma concerning matrices tells us that
\begin{align*}
\sum_j\|Q(|f_k|^2)P(|g_j|^2\|^\frac12 \le M&,\  \sum_k\|Q(|f_k|^2)P(|g_j|^2)\|^\frac12 \le M \implies \\
&\left\|\sum_k (QP)(f_k\ot g_k)\right\| \le M.
\end{align*}
The best reference is Terry Tao's Blog\footnote{Lemma 1,
https://terrytao.wordpress.com/2011/05/25/the-cotlar-stein-lemma} and
the Wikipedia article\footnote{\hbox{https://en.wikipedia.org/wiki/Cotlar\%E2\%80\%93Stein\_lemma]}}.

Now define the Banach function space norm

\begin{align*}
\|\varphi\|_{L^1(QP)}
 &= \\
&\sup\left\{\sum_j\|Q(|f_k|^2)P(|g_j|^2\|^\frac12,\ \sum_k\|Q(|f_k|^2)P(|g_j|^2)\|^\frac12:
\sum_{j,\,k}|f_k|\ot|g_j|\le |\varphi|  \right\}.
\end{align*}
It follows that $L^1(QP)$ is a Banach function space in which $L^1_G(QP)$ is strictly embedded, because
$$\max\left\{\sum_j(\sup|f_k|^2))^\frac12(\sup|g_j|^2)^\frac12,\ \sum_k(\sup|f_k|^2))^\frac12(\sup|g_j|^2)^\frac12: j,k \right\}$$
$$= \max\left\{\|f_k\|_\infty\sum_j \|g_j\|_\infty   , \|g_j\|_\infty\sum_k \|f_k\|_\infty  : j,k\right\}$$
for $\sum_{j,\,k}|f_k|\ot|g_j|\le |\varphi|$
and this obviously differs from the $L^1_G(QP)$-norm which is stronger.

 Write this set as $\f C(|\varphi|)$. Then
$$\left\|(QP)(|\varphi|)\right\| = \sup_{u\in \f C(|\varphi|)} \left\|(QP)(u)\right\|.$$
Standard Banach lattice arguments ensure that $L^1(QP)$ is a Dedekind complete Banach function space
and $(QP):L^1(QP)\to\cL(\cH)$ is a contimuous linear map for the uniform operator topology.
In fact, for regular spectral measures $Q,P$, the lcs topology $\tau_1(QP)$ mentioned above possesses the remarkable property that
the topology defined 
by the norm $\varphi\mapsto \left\|(QP)(|\varphi|)\right\|$,
which is an integrating norm for $QP$ on the linear space of  simple functions on product sets, has the same bounded subsets of $L^1(QP)$ determined by $\tau_1(QP)$. The identity
$$\varphi.(QP) = \varphi_+\cdot(QP) - \varphi_-\cdot(QP)$$
analogous to the Lebesgue theory does not rely on $Q$ and $P$ being regular and
$$\left\|(QP)(|\varphi|)\right\|\le \inf\left\{\sum_{t\in T} \|\a(\cdot,t)\|_{L^\infty(Q)}\|\b(\cdot,t)\|_{L^\infty(P)} : 
\varphi(\l,\g)= \sum_{t\in T} \a(\l,t)\b(\g,t)\right\} $$
because, clearly
$$\left\|\sum_{t\in T} Q(\a(\l,t))P(\b(\g,t))\right\|_{\cL(\cH)}\le \sum_{t\in T} \|\a(\cdot,t)\|_{L^\infty(Q)}\|\b(\cdot,t)\|_{L^\infty(P)}$$
and the inequality may be strict. The integrals $\varphi.(QP)$ determined by $L^1(QP)$ and the locally convex topology
$\tau_1(QP)$ are the same and are identical as vector spaces---the norm topology of $L^1(QP)$ and the locally convex topology
$\tau_1(QP)$ have the same bounded sets by the uniform boundedness principle.

Although $QP$ is not a vector valued measure, the Cotlar-Stein lemma ensures that it possesses
properties remarkably similar to a Banach space valued measure. The same analysis applies to the polymeaures
considered in Remark \ref{rem:5} below arising in quantum theory.

\section{The Weyl Functional Calculus}

Let $\cM,\cD$ be a $(2n)$-system of $S_\om(\C)$-sectorial operators and $\cW_{\cM,\cD}$ the associated bilnear
$H^\infty$-Weyl functional calculus, when square function estimates obtain
\cite{Jef0}. 

We  extend the linear map $f\mapsto \cW_{\cM,\cD}(f)$
to a class larger than holomorphic functions by the Cotlar-Stein Lemma, so that
$$\cD{om}(\cM,\cD) = \left[(H^\infty\otimes L^\infty)\bigcup( L^\infty\otimes H^\infty)\right]$$
is optimal or equivalently, the linear space
 $$\left[({\bf sim}(S_\om(\C^n))\otimes L^\infty(\C^n))\bigcup( L^\infty((\C^n))\otimes {\bf sim}(S_\om((\C^n))))\right]$$
 is optimal by extendibility.
The convergence of the  associated bilinear singular integral was a longstanding conjecture in Harmonic Analysis
solved eventually be Michael Lacey, but optimality is actually
a simple application of Cotlar's matrix bounds and McIntosh methods. Moreover, holomorphic functions with decay in $S_\om(\C^n)$ suffice if there are no square function estimates.

The map $\cW_{\cM,\cD}$ is not a bimeasure or separately $\s$-additive,\,
but $f\mapsto \cW_{\cM,\cD}(f\circ diag)$ is an $H^\infty(S_\om)$-functional calculus corresponding to
the $n$-system $\cW_{\hbox{ diag($\cM,\cD$)}}$ projected onto the diagonal, so
giving the finitely additive ``spectral measure" on $S_\om(\C)$ of the sectorial operator $\frac12(\langle\cM,\cD\rangle
+\langle\cD,\cM\rangle)$. For the classical system, this directly defines a genuine selfadloint
spectral measure by the classical vector measure extension theory.

Although $(E,F)\mapsto \cW_{\cM,\cD}(\xchi_{E\times F})$ is finitely additive on
intervals as a bilinear singular integral operator, it is not \textit{regular} on product sets
and therefore not separately $\s$-additive, so it
is not a genuine bimeasure and the domain
$\cD{om}(\cM,\cD)$ is genuinely optimal for the continuous linear map
$\cW_{\cM,\cD}:\langle{\bf sim}(\R^{2n}),\|\cdot\|_\infty\rangle\to \cL(\R^{2n})$
in the selfadjoint case.
The beast reared its ugly head in 1994 \cite{JefRick} and is laid to rest in 2023 here.

However, for the classical Weyl system and generalisations, results on Fourier integral operators
provide an integration structure for $\cW_{\cM,\cD}$
and an optimal domain via deep Harmonic Analysis methods, as would be expected \cite{vanNPort}.

\section{Double Operator Integrals}
Let $\cH$ be a separable Hilbert space. Two bounded linear operators $S,T\in \cL(\cH)$ determine
a \textit{transformer} $({S\overline\ot T})_{\cC_p(\cH)}:u\mapsto SuT$ for every element of the Schatten class
$\cC_p(\cH)$, $1\le p\le \infty$, an old idea going back to M. Krein and Yu Daletskii
in the 50's  \cite{{BirmSol}}. By duality 
$$\|({S\overline\ot T})_{\cC_p(\cH)}\|_{\cL(\cC_p(\cH))} = \|({S\overline\ot T})_{\cC_q(\cH)}\|_{\cL(\cC_q(\cH))}$$
for the dual index $q$ satisfying $1/p+1/q=1$ with $1\le p,q\le\infty$. In particular, for the trace class operators 
$\cC_1(\cH)$ on $\cH$, $\langle \cC_1(\cH),\cC_\infty(\cH)\rangle$ is a dual pair for the compact linear operators
$\cC_\infty(\cH) = \cC_1(\cH)'$ \cite{BirmSol}. The crude inequality $\|ST\|_{\cL(\cH)} \le
 \|({S\overline\ot T})_{\cC_1(\cH)}\|_{\cL(\cC_1(\cH))}$ helps us estimate a proper subspace of $L^1(QP)$
 because $QP = QIP$ for the identity operator $I$ on $\cH$, see M. Birman and M. Solomyak \cite[Section 3.1]{BirmSol}
 and V.Peller. 

The following result was proved by M. Birman and M. Solomyak \cite[Section 3.1]{BirmSol}
who initiated this program in scattering theory 

\begin{thm} Let $(\L ,\cE)$ and $(\G,\cF)$ be measurable spaces and $\cH$ a separable Hilbert space.  
Let $P:\cE\to\cL _s(\cH)$ and $Q:\cF\to\cL _s(\cH)$ be  spectral measures. 

Then  
there exists a unique spectral measure $({P\overline\ot Q})_{\cC_2(\cH)}:\cE\ot\cF \to \cL (\cC_2(\cH))$ such that
$$({P\overline\ot Q})_{\cC_2(\cH)}(A)= ({P\ot Q})_{\cC_2(\cH)}(A)$$ 
for every set $A$ belonging to the algebra
$\cA $ of all finite unions of product sets $E\times F$ for $E\in\cE$, $F\in\cF$, and
$$\int_A\varphi\,d(P\ot Q)_{\cC_2(\cH)} = \int_A\varphi\,d({P\overline\ot Q})_{\cC_2(\cH)}\in \cL (\cC_2(\cH)),\quad A\in \cE\ot\cF,$$  
for every bounded $(\cE\ot\cF)$-measurable function $\varphi:\cL \times M\to \C$. Moreover,
$$\|({P\overline\ot Q})_{\cC_2(\cH)}(\varphi)\|_{\cL (\cC_2(\cH))} = \|\varphi\|_\infty.$$
\end{thm}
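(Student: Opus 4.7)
The plan is to reduce the assertion to the existence and uniqueness of a joint spectral measure for a commuting pair of spectral measures on a Hilbert space, the target Hilbert space being $\cC_2(\cH)$ itself equipped with the Hilbert--Schmidt inner product $\langle u,v\rangle_{\cC_2}=\operatorname{tr}(uv^*)$.

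First, I would define two commuting spectral measures on $\cC_2(\cH)$ by left and right multiplication: $\tilde P(E):u\mapsto P(E)u$ and $\tilde Q(F):u\mapsto uQ(F)$ for $E\in\cE$ and $F\in\cF$. Because $\cC_2(\cH)$ is a two-sided ideal in $\cL(\cH)$ and $P(E),Q(F)$ are self-adjoint projections, a routine trace computation shows that $\tilde P(E)$ and $\tilde Q(F)$ are orthogonal projections on the Hilbert space $\cC_2(\cH)$, and they commute because $(P(E)u)Q(F)=P(E)(uQ(F))$. Strong $\s$-additivity of $\tilde P$ and $\tilde Q$ on $\cC_2(\cH)$ follows from the strong $\s$-additivity of $P$ and $Q$ on $\cH$, applied termwise along an orthonormal basis $(e_k)$: for $u\in\cC_2(\cH)$ and $E_n\uparrow E$,
$$\|\tilde P(E)u-\tilde P(E_n)u\|_{\cC_2}^2=\sum_k\|P(E\setminus E_n)ue_k\|_\cH^2\to 0$$
by the dominated convergence theorem with summable majorant $k\mapsto\|ue_k\|_\cH^2$.

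Next, set $(P\ot Q)_{\cC_2(\cH)}(E\times F):=\tilde P(E)\tilde Q(F)$ on rectangles and extend by finite additivity to the generating algebra $\cA$; well-definedness is the standard disjoint-rectangle verification. The classical extension theorem for commuting spectral measures---essentially the construction of a joint resolution of the identity for two commuting resolutions of the identity---then produces a unique $\s$-additive spectral measure $(P\ov\ot Q)_{\cC_2(\cH)}$ on $\cE\hot\cF$ extending the rectangle definition. Uniqueness is immediate: for any $u,v\in\cC_2(\cH)$, the scalar measures $E\mapsto\langle(P\ov\ot Q)_{\cC_2(\cH)}(E)u,v\rangle_{\cC_2}$ are determined on $\cA$, hence on $\cE\hot\cF$ by the Hahn extension theorem for scalar measures. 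The norm identity $\|(P\ov\ot Q)_{\cC_2(\cH)}(\varphi)\|_{\cL(\cC_2(\cH))}=\|\varphi\|_\infty$ is then the standard isometric-representation property of the $L^\infty$ functional calculus associated with any spectral measure on a Hilbert space, and the coincidence of the two integrals in the statement reduces, by uniform approximation by $\cA$-simple functions, to coincidence on $\cA$, which holds by construction.

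The main obstacle is the passage from separate to joint $\s$-additivity on $\cA$: the commutation relation $\tilde P(E)\tilde Q(F)=\tilde Q(F)\tilde P(E)$ together with the fact that products of commuting orthogonal projections are again orthogonal projections are crucial, and they are available only because $\cC_2(\cH)$ is a Hilbert space. This is precisely why the theorem is stated in the $\cC_2$-setting; the analogous statement for the Schatten class $\cC_p(\cH)$ with $p\ne 2$ is false in general, a phenomenon familiar from the theory of double operator integrals (the reason $L^1_G$ and $L^1(QP)$ genuinely differ in the preceding section).
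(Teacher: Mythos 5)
Your outline is correct and is in fact the standard argument behind this result, which the paper itself does not prove but attributes to Birman and Solomyak (\cite[Section 3.1]{BirmSol}): the whole point of working in $\cC_2(\cH)$ is that it is a Hilbert space on which left multiplication by $P(E)$ and right multiplication by $Q(F)$ define commuting spectral measures, and the theorem then follows from the joint (product) spectral measure theorem for a commuting pair. Your verifications of self-adjointness, idempotence, commutation and strong $\s$-additivity of $\tilde P,\tilde Q$ are fine, as is the uniqueness argument via scalar measures. Two points deserve more care. First, the ``classical extension theorem for commuting spectral measures'' that you invoke is not a routine Hahn--Carath\'eodory step: scalar extension gives you candidate measures $A\mapsto\langle R(A)u,v\rangle_{\cC_2}$, but showing that the extended operator-valued set function on $\cE\hot\cF$ is again projection-valued and $\s$-additive is the real content, and the standard proofs (Birman--Solomyak, Prugove\v{c}ki) use positivity of the rectangle bimeasure, e.g. $\langle\tilde P(E)\tilde Q(F)u,u\rangle_{\cC_2}=\|P(E)uQ(F)\|_{\cC_2}^2\ge 0$, together with separability of the underlying Hilbert space; you should note that $\cC_2(\cH)$ is separable precisely because $\cH$ is, which is where the hypothesis enters, and cite that theorem explicitly rather than treat it as folklore. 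Second, the final identity $\|({P\overline\ot Q})_{\cC_2(\cH)}(\varphi)\|_{\cL(\cC_2(\cH))}=\|\varphi\|_\infty$ holds with $\|\varphi\|_\infty$ read as the essential supremum with respect to the product spectral measure (only $\le$ holds for the pointwise supremum); this is the same convention the paper uses, but your appeal to the ``isometric $L^\infty$ functional calculus'' should say so. With those two clarifications your proof is complete and coincides with the approach of the cited source, including your closing remark explaining why the statement is special to $p=2$.
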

For spectral measures $P$ and $Q$, the formula
$$\left(\int_{E\times F}\varphi\,d(P\ot Q)_{\f{S}}\right)T = \left(\int_{\L \times \G}\varphi\,d(P\ot Q)_{\f{S}}\right)P(E)TQ(F)$$
holds for each $E\in\cE$, $F\in\cF$ and $T\in\f{S}$, so it is only necessary to verify that 
$$\int_{\L \times \G}\varphi\,d(P\ot Q)_{\f{S}}\in\cL (\f{S})$$
in order to show that $\varphi$ is $(P\ot Q)_{\f{S}}$-integrable.

The following observation is just a multiliear extension of the concepts above.
Now for simplicity, we take the \textit{Fourier transform} of $f\in L^1(\R)$ to be the function $\hat f:\R\to \C$
defined by
$\hat f(\xi)=\int_\R e^{-i\xi x }f(x)\,dx$ for $\xi\in \R$.

\begin{rem}\label{rem:5} Although we shall not consider \textit{multiple operator integrals}, as mentioned above,
transformers like $(Q\ot \cdots \ot Q)_{\cC_2(\cH)}$ are relevant to \textit{Feynman path integrals} in theoretical physics \cite{Jef}. Suppose that
$S$ is a C$_0$-semigroup of Hilbert-Schmidt operators or operators
in the Schatten class $\cC_p(\cH)$ for some $1 < p <\infty$. Then
for every $0<t_1<\cdots < t_n < t$ the expression
$$
(Q\ot \cdots \ot Q)_{\cC_2(\cH)}
(S(t_n-t_{n-1}),\dots,S(t_1))= 
QS(t_n-t_{n-1})\cdots QS(t_1)Q
$$
defines a separately $\s$-additive set function
$$F_0\times F_1\times\cdots\times F_n\longmapsto Q(F_n)S(t_n-t_{n-1})\cdots Q(F_1)S(t_1)Q(F_0),\quad F_j\in\cF,\ j=1,\dots,n.$$
that is the resriction of an operator valued measure.
It is possible to make sense of the inclusions
$$ 
 L^\infty(M\times \cdots \times M) = L^1\big((Q\ot \cdots \ot Q)_{\cC_2(\cH))}\big)  \subset L^1\big(QS(t_n-t_{n-1}) \cdots QS(t_1)Q\big) $$
and
$$L^1\big((Q\ot \cdots \ot Q)_{\cC_p(\cH))}\big)  \subset  L^1\big(QS(t_n-t_{n-1}) \cdots QS(t_1)Q\big)
.$$
In the case $p\ne 2$, the inclusions provide a criterion for integration with respect to operator valued set functions that may only be separately $\s$-additive and not fully $\s$-additive on the algebra of product sets (polymeasures) giving elementary Feynman-type integrals.
Multiple operator integrals are considered in greater detail in  \cite{SkTo}.

The group $S(t)=e^{-it\Delta/2}$, $t\in\R$, of bounded linear operators
on $L^2(\R^d)$ belongs to no $p$-Schatten class for $p\neq\infty$ and $L^1\big(QS(t_n-t_{n-1}) \cdots QS(t_1)Q\big)
$ is best described in terms of symbol classes of certain oscillatory pseudo-differential operators as mention above for the bimeasure $QP$. 
If the operator valued Feynman set function $M_{-i}^t$ described in \cite{Jef} is restricted to cylinder sets 
$$\cE_{t_1,\dots,t_n} =\{X_{t_1}\in B_1,\dots,X_{t_n}\in B_n\}$$
 with  $0 < t_1 <\cdots < t_n \le t$ 
 , then
 $$L^1(M_{-i}^t\restriction\cE_{t_1,\dots,t_n}) = L^1\big(QS(t_n-t_{n-1}) \cdots QS(t_1)Q\big)$$
 is completely understood by the Cotlar-Stein Lemma as a Banach function space similar to the Lebesgue space $L^1(\mu)$
 for an abstract measure $\mu$. This is a small step towards treating Feynman's mathematical and emotional inadequacies,
 which seems to have been missed until now.

On the other hand, product functions
$\varPhi=f_0\ot f_1\ot\cdots \ot f_n$, $f_j\in L^\infty((M,\cF,Q))$, $j=1,\dots,n$,
always belong to 
$$L^1\big(QS(t_n-t_{n-1}) \cdots QS(t_1)Q\big)$$
so that
\begin{align*}
\int_{M\times\cdots\times M} \varPhi\,d\big(QS(t_n-t_{n-1}) \cdots QS(t_1)Q\big)
&= Q(f_n)S(t_n-t_{n-1}) \cdots Q(f_1)S(t_1)Q(f_0)\\
&=\int_\Omega \varPhi\circ X\,dM_{-i}^t
\end{align*}
as expected. The space of paths $\Om$ consists of continuous functions $\om$ on $[0,\infty)$
with $X_t(\om) = \om(t)$ for $t\ge 0$ as for Brownian motion.

Subspaces of the symbol classes $L^1((QP)\stackrel{n}{\cdots} (QP))$ and $L^1((PQ)\stackrel{n}{\cdots}(PQ))$ are also described in terms of the Cotlar-Stein lemma in an analogous procedure to that above, in the case that $Q$ and $P$ are regular spectral measures. We use the integrating seminorms
$$\rho_h^{(n)}(f) = \|\left(QP\cdots QP\right)(|f|)h\|,\quad h\in \cH,\quad \rho_h = \rho_h^{(1)}$$
on product functions $f$. Note that the spectral measure property implies
the useful characterisation
$$f\in L^1(QP) \iff     \xchi_Ef\xchi_F\in L^1(QP),\quad \forall E,F,$$
$Q(E)(QP)(f)Q(F) = (QP)(\xchi_Ef\xchi_F)$
and
$$ L^\infty(Q)\hot_\pi L^\infty(P) \subset L^1((QP)_{\f{S}_1(\cH)})\subset  L^1(QP).$$
It is proved above that by the Cotlar-Stein lemma, the \textit{bornological} isomorphism $$L^1(QP)=L^1(\cR)$$ holds for $\cR = \{\rho_h:h\in\cH\}$ in the sense the the bounded sets are the same but not the topologies. Here $f\in L^1(QP)$ implies $\xchi_Kf$ is $(QP)$-Lebesgue integrable
for $K$ a compact product set. A similar argument holds for the locally convex topology $\tau_1(QP)$. Here we have a type of uniform boundedness principle
for the bimeasure $QP$.

It is remarkable that the Lebesgue norm topology of the Banach function space $L^1(QP)$ is bornologically equivalent to the much weaker locally convex $\cR$-topology and the locally convex topology $\tau_1(QP)$ for the spectral bimeasure $QP$. The Banach function space $L^1(QP)$ is an \textit{optimal domain}
for the spectral bimeasure $QP$ defined on simple functions. 
\end{rem}

\section{Peller's First and Second Theorems}

Let $\cH$ be a separable Hilbert space.
Let $P:\cE\to\cL _s(\cH)$ and $Q:\cF\to\cL _s(\cH)$ be  spectral measures on 
the measurable spaces $(\L,\cE)$ and $(\G,\cF)$. Let $\f{S}=\cC_1(\cH)$ the
trace class operators on $\cH$. V. Peller 
obtained a Grothendieck-style characterisation
of the function space $L^1((P\ot Q)_\f{S}) = L^1((P\ot Q)_{\cL(\cH)})$. One direction is easy to see. If $\varphi$ has the decomposition
$$\varphi = \int_{T} \a(\cdot,t)\b(\cdot,t)\,d\nu(t)$$
with
$$\int_{T} \|\a(\cdot,t)\|_{L^\infty(Q)}\|\b(\cdot,t)\|_{L^\infty(P)}\, d\nu(t) < \infty,$$
then
$$\big((P\ot Q)_\f{S}(\varphi)\big)u = \int_{T}  P(\a(\cdot,t))uQ\b((\cdot,t))\, d\nu(t)$$
is trace class for $u\in\f{S}$.

In terms of Schur mulipliers of matrices, equation (\ref{eqn:Peller_norm}) shows that
the $L^1_G(m)$-norm of the function (\ref{eqn:Peller_rep}) is given by 
the expression (\ref{eqn:Peller_norm}). The appropriate extension of Grothendieck's
inequality to measure spaces gives Peller's representation
$\varphi = \int_{T} \a(\cdot,t)\b(\cdot,t)\,d\nu(t)$ for
$$\varphi\in L^1((P\ot Q)_\f{S}),$$
an earler conjecture of M. Birman and M. Solomyak \cite{BirmSol}, illustrating how Grothendieck's
deep studies \cite{} eventually solved a problem in scattering theory. The characterisation of
$L^1((P\ot Q)_\f{S})$ may be acheived via martingale convergence \cite{Jef} and a direct application of Grothendieck's fundamental metric theory \cite{Pis, Jef}

This, in turn, leads to the strict inclusion of the space $L^1((P\ot Q)_\f{S})$ in 
$L^1((P Q))$. A better handle on membership in $L^1((P\ot Q)_\f{S})$ is achieved with Peller's study of Besov space \cite{Peller, PellerI}. Membership of $L^1((PQ))$ and $L^1((QP))$ on abelian groups can also be achieved with H\"ormander's general study of symbol classes.

\section{Fundamental Results for Double Operator Integrals and Traces}

Here we sketch a few results that can be found in greater depth in \cite{SkTo}.

\begin{thm} Let $\cH$ be a separable Hilbert space.
Let $P:\cB(\R)\to\cL _s(\cH)$ and $Q:\cB(\R)\to\cL _s(\cH)$ be  spectral measures on $\R$. Let $\f{S}=\cC_p(\cH)$ for some $1\le p <\infty$
or $\f{S}=\cL (\cH)$. Suppose that $f\in L^1(\R)$ and $\varphi(\l,\mu) = \hat f(\l-\mu)$ for all  $\l,\mu\in\R$.
Then  $\int_{\R\times \R}\varphi\,d(P\ot Q)_\f{S}\in \cL (\f{S})$ and
$$\left\|\int_{\R\times \R}\varphi\,d(P\ot Q)_\f{S}\right\|_{\cL (\f{S})} \le \|f\|_1 .$$
\end{thm}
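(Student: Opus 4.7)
The approach is to use Fourier inversion to expand $\varphi$ as an integral superposition of rank-one tensors of \emph{unimodular} functions, then apply the easy direction of Peller's first theorem (the representation $\varphi=\int\a(\cdot,t)\b(\cdot,t)\,d\nu(t)$ recorded immediately before this theorem) together with the observation that unitary conjugation is an isometry on every symmetrically normed operator ideal.

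First I would rewrite the symbol. Since $\varphi(\l,\mu)=\hat f(\l-\mu)=\int_\R e^{-i\xi\l}e^{i\xi\mu}f(\xi)\,d\xi$, set
\begin{equation*}
\a(\l,\xi)=e^{-i\xi\l},\qquad \b(\mu,\xi)=e^{i\xi\mu},\qquad d\nu(\xi)=f(\xi)\,d\xi,
\end{equation*}
so that $\varphi(\l,\mu)=\int_\R \a(\l,\xi)\b(\mu,\xi)\,d\nu(\xi)$. Because $|\a(\cdot,\xi)|=|\b(\cdot,\xi)|=1$, we have $\|\a(\cdot,\xi)\|_{L^\infty(Q)}=\|\b(\cdot,\xi)\|_{L^\infty(P)}=1$ and
\begin{equation*}
\int_\R \|\a(\cdot,\xi)\|_{L^\infty(Q)}\|\b(\cdot,\xi)\|_{L^\infty(P)}\,d|\nu|(\xi)=\|f\|_1<\infty,
\end{equation*}
which is exactly the hypothesis of the easy direction of Peller's representation. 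Consequently, for every $u\in\f{S}$,
\begin{equation*}
\bigl((P\ot Q)_\f{S}(\varphi)\bigr)u
=\int_\R P(\a(\cdot,\xi))\,u\,Q(\b(\cdot,\xi))\,f(\xi)\,d\xi,
\end{equation*}
the integral on the right being understood in the strong operator (equivalently Bochner in $\f{S}$) sense.

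Next I would exploit unitarity. By the spectral functional calculus applied to the bounded, modulus-one Borel functions $\l\mapsto e^{-i\xi\l}$ and $\mu\mapsto e^{i\xi\mu}$, the operators $U_\xi:=P(e^{-i\xi\cdot})$ and $V_\xi:=Q(e^{i\xi\cdot})$ are unitary on $\cH$. Hence the transformer $u\mapsto U_\xi u V_\xi$ preserves singular values and is therefore an isometry on each Schatten class $\cC_p(\cH)$, $1\le p<\infty$, and on $\cL(\cH)$. Taking norms inside the integral gives
\begin{equation*}
\bigl\|(P\ot Q)_\f{S}(\varphi)u\bigr\|_\f{S}
\le\int_\R \|U_\xi u V_\xi\|_\f{S}\,|f(\xi)|\,d\xi
=\|u\|_\f{S}\,\|f\|_1,
\end{equation*}
yielding the claimed bound $\|(P\ot Q)_\f{S}(\varphi)\|_{\cL(\f{S})}\le\|f\|_1$ after dividing by $\|u\|_\f{S}$.

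The main obstacle is to rigorously justify that the Peller representation actually \emph{computes} the double operator integral of $\varphi$, i.e.\ that one may interchange the spectral integration defining $(P\ot Q)_\f{S}$ with the outer Lebesgue integration against $f(\xi)\,d\xi$. This requires (i) strong measurability of $\xi\mapsto U_\xi u V_\xi$, which follows from strong continuity of $\xi\mapsto U_\xi$ and $\xi\mapsto V_\xi$ via dominated convergence in the spectral calculus, and (ii) verifying that both sides agree on tensor products $f_0\otimes f_1$ of simple functions and then invoking the uniqueness portion of the Birman-Solomyak extension theorem cited above. For the borderline case $\f{S}=\cL(\cH)$, where Bochner integrability in the uniform operator topology fails, one interprets the integral weakly via the duality $\cC_1(\cH)^*=\cL(\cH)$: the inequality is first established on $\cC_1(\cH)$ (where the trace-norm estimate just carried out applies) and then transferred to $\cL(\cH)$ by dualizing the transformer, using $\|(S\overline\ot T)_{\cC_p}\|=\|(S\overline\ot T)_{\cC_q}\|$ recorded earlier.
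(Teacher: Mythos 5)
Your proposal is correct and is essentially the paper's own argument: both expand $\hat f(\l-\mu)=\int_\R e^{-i\xi\l}e^{i\xi\mu}f(\xi)\,d\xi$, interchange the integrations (justified scalarly/by agreement on rectangles) to obtain $\bigl((P\ot Q)_\f{S}(\varphi)\bigr)u=\int_\R e^{-i\xi A}u\,e^{i\xi B}f(\xi)\,d\xi$ as a Bochner-type integral, and then use unitary invariance of the $\f{S}$-norm to get the bound $\|f\|_1$, handling $\f{S}=\cL(\cH)$ by duality. The only cosmetic difference is that you package the decomposition as the easy direction of Peller's representation, whereas the paper states the same Fubini interchange directly.
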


\begin{proof} For each $T\in \cC_1(\cH)$, the set function $E\times F\longmapsto P(E)T Q(F)$, $E,F\in \cB(\R)$,
is the restriction to all measurable rectangles of an $\cL (\cH)$-valued measure $\s$-additive for the strong operator
topology and the integral
\begin{align}
\int_{\R\times \R}\varphi\,d(PTQ) &= \int_{\R\times \R}\left(\int_\R e^{-it(\l-\mu)t}f(t)\,dt\right)\,d(PTQ)(\l,\mu)\cr
&= \int_\R\left(\int_{\R\times \R} e^{-it(\l-\mu)t}\,d(PTQ)(\l,\mu)\right)f(t)\,dt\cr
&=\int_\R e^{-itA}Te^{itB}f(t)\,dt\label{eqn:dopform}
\end{align}
converges as a Bochner integral in the strong operator topology to an element of
the operator ideal $\cC_1(\cH)$ of trace class operators. The interchange
of integrals is verified scalarly. 

It follows that
$\varphi$ is a $(P\ot Q)_{\cC_1(\cH)}$-integrable function and 
$$\left\|\int_{\R\times \R}\varphi\,d(P\ot Q)_{\cC_1(\cH)}\right\|_{\cL (\cC_1(\cH))} \le \|f\|_1 .$$
The corresponding bound for $\f{S}=\cC_p(\cH)$ for $1\le p <\infty$
and $\f{S}=\cL (\cH)$ follows by duality and interpolation, or directly from formula (\ref{eqn:dopform}).
\end{proof}

\begin{prp} Let $\cH$ be a separable Hilbert space and let $A,B$ be selfadjoint operators with 
spectral measures $P_A:\cB(\s(A))\to\cL _s(\cH)$ and $P_B:\cB(\s(B))\to\cL _s(\cH)$, respectively. Let $\f{S}=\cC_p(\cH)$ for some $1\le p <\infty$
or $\f{S}=\cL (\cH)$.  If the spectra of $A$ and $B$ are separated by a distance $d(\s(A),\s(B)) = \d > 0$,
then  $\int_{\s(A)\times \s(B)} ( \l-\mu)^{-1}(P_A\ot P_B)_\f{S}(d\l,d\mu)  \in \cL (\f{S})$ and
$$\left\|\int_{\s(A)\times \s(B)}\frac{(P_A\ot P_B)_\f{S}(d\l,d\mu)}{\l-\mu}\right\|_{\cL (\f{S})} \le \frac{\pi}{2\d} .$$
In particular, $AX-XB = Y$ has a unique strong solution for $Y\in\f{S}$ given by the double operator integral
$$X= \int_{\s(A)\times \s(B)}\frac{ dP_A(\l)YdP_B(\mu)  }{\l-\mu}:=\left(\int_{\s(A)\times \s(B)}\frac{(P_A\ot P_B)_\f{S}(d\l,d\mu)}{\l-\mu}\right)Y,$$
so that $\|X\|_\f{S} \le \frac{\pi}{2\d}\|Y\|_\f{S}$ \cite{BDMc}.
\end{prp}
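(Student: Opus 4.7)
The plan is to deduce the operator estimate from the preceding theorem by realising the symbol $\varphi(\l,\mu)=1/(\l-\mu)$ on $\s(A)\times\s(B)$ as $\hat f(\l-\mu)$ for some $f\in L^1(\R)$ with $\|f\|_{L^1}\le \pi/(2\d)$. Since $|\l-\mu|\ge\d$ on $\s(A)\times\s(B)$, only the values of $\hat f$ on $\{|s|\ge\d\}$ are relevant, so the task is to extend $s\mapsto 1/s$ from $\{|s|\ge\d\}$ to $\R$ as the Fourier transform of an $L^1$ function of norm at most $\pi/(2\d)$.

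For this construction, mere existence of $f\in L^1(\R)$ with $\hat f(s)=1/s$ on $\{|s|\ge\d\}$ is routine: any sufficiently smooth odd extension $g$ of $(1/s)\xchi_{\{|s|\ge\d\}}$ to $\R$ lying in the Wiener algebra (the image of $L^1(\R)$ under the Fourier transform) yields an admissible $f=\f{F}^{-1}g$. Attaining the sharp constant $\pi/(2\d)$ is the heart of the argument. I would approach it through the classical duality identifying
$$\inf\{\|f\|_{L^1}:\hat f(s)=1/s\text{ for }|s|\ge\d\}$$
with a supremum over bandlimited test functions of exponential type $\d$, and then evaluating that supremum explicitly by a Paley--Wiener / Beurling--Selberg computation. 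This is the one step where I expect genuine technical effort.

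Once $f$ is in hand, the preceding theorem delivers $\int_{\R\times\R}\hat f(\l-\mu)\,d(P_A\ot P_B)_\f{S}\in\cL(\f{S})$ with operator norm at most $\|f\|_{L^1}\le\pi/(2\d)$. Because the bimeasure is concentrated on $\s(A)\times\s(B)$ where $\hat f(\l-\mu)=1/(\l-\mu)$, this is the asserted integral and bound. The Sylvester identity then follows by writing $X=\int_\R e^{-itA}Ye^{itB}f(t)\,dt$ from the preceding theorem's proof and applying $\tfrac{d}{dt}(e^{-itA}Ye^{itB})=-iAe^{-itA}Ye^{itB}+ie^{-itA}Ye^{itB}B$ under the integral, followed by integration by parts: the result is the double operator integral with symbol $(\l-\mu)\hat f(\l-\mu)$, which equals $1$ on $\s(A)\times\s(B)$ and hence produces $Y$. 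For uniqueness, any $W$ with $AW=WB$ satisfies $\psi(A)W=W\psi(B)$ for every bounded Borel $\psi$ by functional calculus; taking $\psi=\xchi_{\s(A)}$ gives $\psi(A)=I$ and $\psi(B)=0$ since $\s(A)\cap\s(B)=\emptyset$, forcing $W=0$.
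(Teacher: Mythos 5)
The paper itself gives no proof of this proposition---it is quoted from Bhatia--Davis--McIntosh \cite{BDMc}---and your strategy is exactly the intended one: produce $f\in L^1(\R)$ with $\hat f(s)=1/s$ on $\{|s|\ge\d\}$, feed $\varphi(\l,\mu)=\hat f(\l-\mu)$ into the preceding theorem, observe that the bimeasure $(P_A\ot P_B)_{\f S}$ is concentrated on $\s(A)\times\s(B)$ where $|\l-\mu|\ge\d$, and read off the Sylvester solution from $X=\int_\R e^{-itA}Ye^{itB}f(t)\,dt$ via the symbol identity $(\l-\mu)\hat f(\l-\mu)=1$ there. The uniqueness argument via the intertwining property and $P_B(\s(A))=0$ is also correct in substance (for unbounded $A,B$ the implication $AW=WB\Rightarrow\psi(A)W=W\psi(B)$ should be routed through resolvents, and the differentiation/integration by parts under the integral needs a word about domains and about what ``strong solution'' means; these are standard repairs).

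The genuine gap is the step you yourself defer: the existence of $f\in L^1(\R)$ with $\hat f(s)=1/s$ for $|s|\ge\d$ and $\|f\|_1\le\pi/(2\d)$ is announced but not proved, and it carries the entire quantitative content of the statement. A generic smooth Wiener-algebra extension of $s\mapsto(1/s)\xchi_{\{|s|\ge\d\}}$ only yields a bound $C/\d$ with some $C>\pi/2$; the constant $\pi/(2\d)$ is precisely the infimum of $\|f\|_1$ over all admissible $f$ (by scaling one may take $\d=1$, where the minimum is $\pi/2$), so there is no slack to waste in the construction. Your proposed route through duality is plausible but as stated imprecise: the relevant dual class is the unit ball of the annihilator of the bandlimited $L^1$ functions, i.e.\ bounded functions whose spectrum avoids $(-\d,\d)$, not bandlimited functions of exponential type $\d$ themselves, and the extremal value must then actually be computed (or an explicit near-extremal $f$ exhibited, as in \cite{BDMc}). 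Until that extremal problem is settled, your argument establishes boundedness of the double operator integral and solvability plus uniqueness for $AX-XB=Y$, but not the stated bound $\pi/(2\d)$.
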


Although the Heaviside function $\xchi_{(0,\infty)}$ is not the Fourier transform
of an $L^1$-function, the following result of I. Gohberg and M. Krein \cite[Section III.6]{GK1}
holds, in case $P=Q$.

\begin{thm} Let $\cH$ be a separable Hilbert space. 
Let $P:\cB(\R)\to\cL _s(\cH)$ and $Q:\cB(\R)\to\cL _s(\cH)$ be  spectral measures on $\R$. 
Then 
$$\int_{\R\times \R}\xchi_{\{\l>\mu\}}\,d(P\ot Q)_{\cC_p(\cH)} \in \cL (\cC_p(\cH))$$  
for every $1 < p <\infty$.
\end{thm}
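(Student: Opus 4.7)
The plan is to reduce the claim to the classical $L^p(\R)$-boundedness of the Hilbert transform via a Coifman--Weiss type transference argument implemented through an isometric group of conjugations. Writing $\xchi_{\{\l>\mu\}}=\tfrac12(1+\sgn(\l-\mu))$ off the diagonal, and noting that the constant symbol yields a scalar multiple of the identity, it is enough to show that $(P\ot Q)_{\cC_p(\cH)}(\sgn(\l-\mu))\in\cL(\cC_p(\cH))$ for $1<p<\infty$.

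First I would exploit the previous theorem as follows. Let $A,B$ be the (possibly unbounded) self-adjoint operators with spectral measures $P$ and $Q$, and set $T_t(u)=e^{-itA}ue^{itB}$ for $t\in\R$. Since left and right multiplication by unitaries preserve every Schatten norm, $\{T_t\}_{t\in\R}$ is a strongly continuous group of isometries on $\cC_p(\cH)$ for each $1\le p\le\infty$. The previous theorem identifies, for $f\in L^1(\R)$ and $\varphi(\l,\mu)=\hat f(\l-\mu)$, the transformer as
$$(P\ot Q)_{\cC_p(\cH)}(\varphi)\,u=\int_\R f(t)\,T_t(u)\,dt,\quad u\in\cC_p(\cH),$$
so the object of interest takes the form of a group-convolution operator driven by an isometric representation of $\R$.

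Next I would invoke the Coifman--Weiss transference principle in its UMD-valued form: any bounded Fourier multiplier $m$ on $L^p(\R)$ with multiplier norm $\|m\|_{M_p}$ produces, via the isometric strongly continuous representation $\{T_t\}$, a bounded linear operator $M_m^{P,Q}$ on $\cC_p(\cH)$ of norm at most $C_p\|m\|_{M_p}$, since $\cC_p(\cH)$ is a UMD Banach space for $1<p<\infty$. Applying this with $m(\xi)=\sgn(\xi)$, which is a bounded $L^p(\R)$-multiplier by Riesz's classical theorem on $L^p$-boundedness of the Hilbert transform, yields a bounded operator $M_{\sgn}^{P,Q}$ on $\cC_p(\cH)$. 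To identify $M_{\sgn}^{P,Q}$ with $(P\ot Q)_{\cC_p(\cH)}(\sgn(\l-\mu))$, I would first use the first theorem of the previous section to observe that on $\cC_2(\cH)$ the spectral calculus of $(P\ot Q)_{\cC_2(\cH)}$ gives the identification directly, then approximate $\sgn$ by smoothed versions $\sgn_\e$ lying in the Wiener algebra, apply the displayed formula together with dominated convergence on matrix elements, and extend by density from $\cC_1\cap\cC_p$ to all of $\cC_p(\cH)$.

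The hard part will be executing the transference step rigorously in the noncommutative Schatten setting. Coifman--Weiss transference was originally formulated for scalar $L^p$-spaces; its extension to situations where the target is a UMD Banach space such as $\cC_p(\cH)$ requires either the UMD theory of vector-valued singular integrals or a direct martingale/dilation construction. A subtle point is that $\{T_t\}$ is merely an isometric, not unitary, group on $\cC_p(\cH)$ when $p\neq 2$; for $p=2$ the claim follows immediately from the spectral calculus applied to the self-adjoint generator of $\{T_t\}$ on the Hilbert space $\cC_2(\cH)$, whereas for $p\neq 2$ the UMD structure is indispensable. The genuine failure at the endpoints $p=1,\infty$, matching Gohberg--Krein's classical unboundedness of the triangular projection on $\cC_1$ and $\cC_\infty$, confirms that no softer argument based on interpolation from the endpoints can succeed.
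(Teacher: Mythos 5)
The paper itself offers no proof of this statement: it is quoted as a classical result of Gohberg and Krein \cite[Section III.6]{GK1}, immediately after the observation that the Heaviside symbol is not the Fourier transform of an $L^1$-function. So your argument is necessarily a different route from anything in the text; it is in fact the standard modern proof (transference through the isometric group $T_t:u\mapsto e^{-itA}ue^{itB}$ combined with the UMD property of $\cC_p(\cH)$, in the spirit of Berkson--Gillespie and Potapov--Sukochev), whereas the cited Gohberg--Krein argument proceeds through triangular truncation along a chain of projections. Your starting point is exactly right: the preceding theorem's formula $(P\ot Q)_{\cC_p(\cH)}(\hat f(\lambda-\mu))u=\int_\R f(t)\,e^{-itA}ue^{itB}\,dt$ exhibits the transformer as a convolution operator driven by a strongly continuous isometric representation of $\R$ on $\cC_p(\cH)$, and the failure at $p=1,\infty$ that you note is genuine.

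Two points need repair before this is a proof. First, transference does not bound the transferred operator by the scalar multiplier norm $\|m\|_{M_p}$: for a representation on a Banach space $X$ it bounds it by the norm of the multiplier acting on the Bochner space $L^p(\R;X)$, which is not automatic for a general scalar multiplier. Your application survives only because the specific multiplier $\sgn$ is, up to constants, the Hilbert transform, whose boundedness on $L^p(\R;\cC_p(\cH))$ is precisely what UMD supplies; state it that way, and run the transference on the truncated kernels $\int_{\e<|t|<N}T_tu\,\frac{dt}{t}$ with bounds uniform in $\e,N$ before passing to the limit. Second, the identity $\chi_{\{\lambda>\mu\}}=\tfrac12(1+\sgn(\lambda-\mu))$ fails on the diagonal, and the diagonal cannot simply be dropped: $P$ and $Q$ may have common atoms, so the symbol $\chi_{\{\lambda=\mu\}}$ produces a nonzero transformer, and your smoothed approximants $\sgn_\e$ (which vanish at the origin) recover only the off-diagonal part. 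Fortunately the diagonal transformer is a contraction on every $\cC_p(\cH)$: for a countable Borel partition $\{E_j\}$ one has $\sum_j P(E_j)uQ(E_j)=\mathbb{E}\bigl[\bigl(\sum_j r_jP(E_j)\bigr)u\bigl(\sum_k r_kQ(E_k)\bigr)\bigr]$ with independent Rademacher signs $r_j$, each factor being a contraction, and one passes to refining partitions using convergence in $\cC_2(\cH)$ together with lower semicontinuity of the $\cC_p$-norm under weak convergence. With those two corrections your identification scheme through $\cC_2(\cH)$ and density closes the argument.
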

The following recent result of F. Sukochev and D. Potapov (see \cite{TodTur}) settled a long outstanding
conjecture of M. Krein for the index $p$ in the range $1 < p <\infty$. 
\begin{thm}\label{thm:Krein} Let $\cH$ be a separable Hilbert space.
Let $P:\cB(\R)\to\cL _s(\cH)$ and $Q:\cB(\R)\to\cL _s(\cH)$ be  spectral measures on $\R$. 
Suppose that $f:\R\to \R$ is a continuous function for which the difference quotient
$$\varphi_f(\l,\mu) = \left\{\begin{array}{ll}
\frac{f(\l)-f(\mu)}{\l-\mu}&,\quad\l\neq\mu, \\
0&,\quad\lambda = \mu,
\end{array}\right.$$
is uniformly bounded.
Then for every $1 < p <\infty$,
$$\int_{\R\times \R}\varphi_f\,d(P\ot Q)_{\cC_p(\cH)} \in \cL (\cC_p(\cH))$$
and there exists $C_p > 0$ such that
$$\left\|\int_{\R\times \R}\varphi_f\,d(P\ot Q)_{\cC_p(\cH)}\right\|_{\cC_p(\cH)} \le C_p\|\varphi_f\|_\infty.$$
\end{thm}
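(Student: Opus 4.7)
The plan is to reduce the boundedness of the double operator integral with symbol $\varphi_f$ on $\cC_p(\cH)$ to the triangular truncation bound of the preceding Gohberg--Krein theorem, combined with the UMD property of $\cC_p(\cH)$ for $1<p<\infty$.

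By standard approximation one may replace $P$ and $Q$ by finitely supported discrete spectral measures, so that the double operator integral becomes a Schur multiplier on finite matrices with entries $\varphi_f(\l_i,\mu_j)$. It therefore suffices to prove that its norm on $\cC_p$ is bounded by $C_p\|\varphi_f\|_\infty$, uniformly in the discretisation. A martingale convergence argument of the type mentioned in \cite{Jef} then transfers the estimate back to arbitrary $P,Q$ and, by a further mollification, to continuous $f$ with only bounded divided difference.

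For the Schur multiplier estimate one writes, in the absolutely continuous case,
\[
\varphi_f(\l,\mu)=\int_0^1 f'(\l+s(\mu-\l))\,ds,
\]
so that a simultaneous dyadic resolution of the identity can be set up for the spectra of both operators. This splits $\varphi_f$ into a far-diagonal part, where the kernel is smooth and admits a rapidly convergent Birman--Solomyak style factorisation that is handled directly by Peller's theorem from the previous section, and a near-diagonal part supported on unions of products $I\times J$ with $I,J$ adjacent dyadic intervals. On each diagonal piece the divided difference is rewritten as a linear combination of restricted triangular truncations $\xchi_{\{\l>\mu\}}$, each bounded on $\cC_p$ by the Gohberg--Krein theorem with a constant depending only on $p$.

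The principal obstacle is assembling these pieces: summing the diagonal contributions requires unconditional convergence of an essentially random series in $\cL(\cC_p(\cH))$, which in turn is furnished by a Paley--Littlewood square-function estimate for Schatten classes valid precisely in the UMD range $1<p<\infty$. This is the step where the restriction $1<p<\infty$ is essential, since $\cC_1$ and $\cL(\cH)$ fail to be UMD and the constants $C_p$ must blow up at the endpoints. Once the square-function bound is in place, the triangle inequality assembles the full estimate
\[
\left\|\int_{\R\times\R}\varphi_f\,d(P\ot Q)_{\cC_p(\cH)}\right\|_{\cL(\cC_p(\cH))}\le C_p\|\varphi_f\|_\infty,
\]
completing the proof.
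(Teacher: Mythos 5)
The paper itself offers no proof of this theorem: it is quoted as the Potapov--Sukochev resolution of Krein's conjecture, with a citation, so there is nothing internal to compare your argument against. Judged on its own terms, your sketch has a genuine gap at exactly the point where the real difficulty of the theorem sits. The assertion that the near-diagonal part of $\varphi_f$ can be ``rewritten as a linear combination of restricted triangular truncations $\chi_{\{\l>\mu\}}$'' with coefficients controlled by $\|\varphi_f\|_\infty$ is unsubstantiated, and one should be suspicious of it for a structural reason: the $\cC_p$-boundedness of triangular truncation ($1<p<\infty$) has been known since Gohberg--Krein--Macaev in the 1960s, while Krein's conjecture stayed open until 2011. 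If a Lipschitz divided difference reduced to triangular truncations plus a Littlewood--Paley square-function argument in the routine way you describe, the theorem would not have been a long-standing open problem. The actual Potapov--Sukochev proof must first reduce to divided differences of piecewise-linear functions and then transfer the Schur-multiplier problem to a vector-valued (UMD-valued) Marcinkiewicz/Mikhlin-type Fourier multiplier estimate going back to Bourgain, with a delicate argument to get bounds uniform in the discretisation; your sketch compresses all of this into ``a Paley--Littlewood square-function estimate ... then the triangle inequality assembles the full estimate,'' which is precisely the step that needs a proof.

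A secondary weak point: handling the far-from-diagonal part ``directly by Peller's theorem'' requires exhibiting, uniformly over all dyadic scales and uniformly in $\|\varphi_f\|_\infty$, an integral projective (Birman--Solomyak/Peller) factorisation of those kernel pieces, and then summing the scales; this is plausible but not automatic, and the summation again throws you back on the unproved square-function step. The outer reductions you make (discretisation of $P$ and $Q$ to finite spectral measures, i.e.\ reduction to finite Schur multipliers with uniform constants, and mollification in $f$) are sound and standard, so the skeleton is reasonable; but as written the proof of the central estimate is missing rather than sketched.
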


Such a function $f$ is said to be \textit{uniformly Lipschitz} on $\R$
and $\|f\|_{{\rm Lip}_1} :=\|\varphi_f\|_\infty$.

\begin{cor} \label{cor:pert_Suk}
Suppose that $f:\R\to \R$ is a uniformly Lipschitz function.
Then for every $1 < p <\infty$, there exists $C_p > 0$ such that
$$\|f(A)-f(B)\|_{\cC_p(\cH)} \le C_p\|f\|_{{\rm Lip}_1}\|A-B\|_{\cC_p(\cH)}$$
for any selfadjoint operators $A$ and $B$ on a separable Hilbert space $\cH$.
\end{cor}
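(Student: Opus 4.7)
The plan is to deduce the inequality from Theorem \ref{thm:Krein} via the classical Daletskii--Krein identity expressing $f(A)-f(B)$ as a double operator integral applied to the difference $A-B$. Concretely, writing $T_{\varphi_f}$ for the transformer $\int_{\R\times\R}\varphi_f\,d(P_A\ot P_B)_{\cC_p(\cH)}$, the strategy has two ingredients: (i) the analytic bound $\|T_{\varphi_f}\|_{\cL(\cC_p(\cH))}\le C_p\|\varphi_f\|_\infty$ furnished by Theorem \ref{thm:Krein}, and (ii) the algebraic identity $f(A)-f(B)=T_{\varphi_f}(A-B)$.

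First we reduce to the case $A-B\in\cC_p(\cH)$, since otherwise the right-hand side of the target inequality is infinite and there is nothing to prove. Next we note that the uniform Lipschitz hypothesis makes $\varphi_f$ a bounded Borel symbol with $\|\varphi_f\|_\infty=\|f\|_{{\rm Lip}_1}$. Feeding this into Theorem \ref{thm:Krein} with the pair $(P_A,P_B)$ delivers the transformer $T_{\varphi_f}\in\cL(\cC_p(\cH))$ satisfying $\|T_{\varphi_f}\|_{\cL(\cC_p(\cH))}\le C_p\|f\|_{{\rm Lip}_1}$, and granting the identity (ii) this immediately yields $\|f(A)-f(B)\|_{\cC_p(\cH)}\le C_p\|f\|_{{\rm Lip}_1}\|A-B\|_{\cC_p(\cH)}$.

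The substantive content is the identity (ii). For a monomial $f(\l)=\l^n$ the divided difference is $\varphi_f(\l,\mu)=\sum_{k=0}^{n-1}\l^{n-1-k}\mu^k$, and since a product symbol $\l^j\mu^k$ has DOI given by $Y\mapsto A^jYB^k$ from the spectral theorem, one obtains the telescoping sum
$$T_{\varphi_f}(A-B)=\sum_{k=0}^{n-1}A^{n-1-k}(A-B)B^k=A^n-B^n.$$
Linearity extends this to all polynomials. To pass to a general uniformly Lipschitz $f$, one mollifies to obtain smooth $f_j\to f$ (uniformly on compacta) with $\|f_j\|_{{\rm Lip}_1}\le\|f\|_{{\rm Lip}_1}$, observes that $\varphi_{f_j}\to\varphi_f$ pointwise and uniformly bounded, and combines the strong-operator continuity of the functional calculus ($f_j(A)\to f(A)$, $f_j(B)\to f(B)$) with continuity of the DOI in the symbol under uniformly bounded pointwise convergence to take both sides of the identity to the limit.

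The principal obstacle is precisely this last approximation step: one must show that $T_{\varphi_{f_j}}(A-B)\to T_{\varphi_f}(A-B)$ in $\cC_p(\cH)$ despite $\varphi_{f_j}$ converging to $\varphi_f$ only pointwise, and with the selfadjoint operators $A,B$ possibly unbounded (the corollary does not restrict to bounded operators). A standard workaround is to first truncate via the spectral projections $P_A([-R,R])$ and $P_B([-R,R])$, establish the identity and the norm estimate for the truncated operators where $\varphi_f$ is uniformly approximable by polynomials on the resulting compact rectangle via Stone--Weierstrass, and then let $R\to\infty$, using $A-B\in\cC_p(\cH)$ to control the tails.
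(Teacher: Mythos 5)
Your two ingredients are exactly the ones the paper uses: reduce to $\|A-B\|_{\cC_p(\cH)}<\infty$, invoke the Daletskii--Krein identity $f(A)-f(B)=\bigl(\int_{\R\times\R}\varphi_f\,d(P_A\ot P_B)_{\cC_p(\cH)}\bigr)(A-B)$, and apply the transformer bound of Theorem \ref{thm:Krein}. The only difference is that the paper does not prove the identity at all --- it cites \cite[Theorem 8.1]{BirmSol} (see also \cite[Corollary 7.2]{dPWS}) --- whereas you attempt a self-contained derivation, and that is where your argument has a genuine gap.

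The problematic step is exactly the one you flag as the ``principal obstacle,'' and your proposed workaround does not repair it. First, uniformly bounded pointwise convergence $\varphi_{f_j}\to\varphi_f$ does not by itself give $T_{\varphi_{f_j}}(A-B)\to T_{\varphi_f}(A-B)$ in $\cC_p(\cH)$ for $p\neq 2$; the dominated-convergence mechanism available for the $\cC_2$-spectral measure $(P_A\ot P_B)_{\cC_2(\cH)}$ does not transfer to $\cC_p$, and one needs an extra duality/weak-compactness argument (test against finite-rank operators and use the uniform bound from Theorem \ref{thm:Krein}), which you have not supplied. Second, the truncation by $P_A([-R,R])$ and $P_B([-R,R])$ fails as described: the truncated operators differ by terms involving $P_A([-R,R])-P_B([-R,R])$, and differences of spectral projections are \emph{not} controlled by $\|A-B\|_{\cC_p(\cH)}$, precisely because $\xchi_{[-R,R]}$ is not Lipschitz --- so ``using $A-B\in\cC_p(\cH)$ to control the tails'' has no basis; moreover the polynomial telescoping identity for the truncated pair involves the spectral measures of the truncations, not of $A$ and $B$, so passing $R\to\infty$ inside the double operator integral is itself a nontrivial limit of the same kind you were trying to avoid. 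The clean options are either to do as the paper does and cite the known identity from \cite{BirmSol} or \cite{dPWS}, or to prove it by a route that avoids polynomial approximation of an unbounded-spectrum functional calculus, e.g.\ reduction via resolvents or the Cayley transform together with the $\cC_2$ case and the uniform $\cC_p$-bound.
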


\begin{proof} Let $P_A$ and $P_B$ be the spectral meaures of $A$ and $B$, respectively, and suppose that
$\|A-B\|_{\cC_p(\cH)} <\infty$. Then according to \cite[Theorem 8.1]{BirmSol} (see also \cite[Corollary 7.2]{dPWS}),
the equality
$$f(A)-f(B) =  \left(\int_{\R\times \R}\varphi_f\,d(P_A\ot P_B)_{\cC_p(\cH)}\right)(A-B)$$
holds and the norm estimate follows from Theorem \ref{thm:Krein}.
\end{proof}

\section{An elementary proof of Krein's spectral trace formula}

We refer to the survey paper of M Birman and M. Solomyak \cite{BirmSol} . According to their
history of the subject, they hoped to prove Krein's formula just with general
double operator integral arguments, which go back to old work of
Yu. Daletskii and M. Krein in the 1950's, see \cite{BirmSol}. They were aware of the arguments of Feynman's Operational Calculus which has received a new boost in the monograph \cite{JohnLap}
of M. Lapidus, L. Nielsen and the late G.W. Johnson.

The proof below is adapted from \cite{Boy} which has an attractive and elementary Fourier Theory argument in the upper half-plane.  Other proofs  use elementary Complex Analysis but all finally appeal to a Bootstrap estimate on eigenvalues of a positive trace class operator. The original argument was simply by analogy from the case of matrices and determinants, which is
often a good testing ground for infinite dimensional operator perturbation theory, see \cite{Kato}.

\begin{thm} Let $\cH$ be a separable Hilbert space
and let $A$ and $B$ be selfadjoint operators with the same domain such that $A-B\in \cC_1(\cH)$. Then
there exists a spectral shift function $\xi\in L^1(\R)$ such that
\begin{equation}\label{eqn:Krein_form}
\hbox{\rm tr}(f(A) - f(B)) = \int_\R f'(\l)\xi(\l)\,d\l
\end{equation}
for every function $f:\R\to \C$ for which there exists a finite positive Borel measure $\mu$ on $\R$ such that 
$$f(x) = i\int_\R \frac{e^{-isx}-1}{s}\,d\mu(s),\quad x\in\R.$$
Furthermore, $\xi$ possesses the following properties.
\begin{enumerate}
\item[\rm a)] $\hbox{\rm tr}(A - B) = \int_\R \xi(\l)\,d\l$.
\item[\rm b)] $\|\xi\|_1\le \|A-B\|_{\cC_1(\cH)}$.
\item[\rm c)] If $B\le A$, then $\xi\ge 0$ a.e.
\item[\rm d)] $\xi$ is zero a.e. outside the interval $(\inf(\s(A)\cup\s(B)),\sup(\s(A)\cup\s(B)))$. 
\end{enumerate}
\end{thm}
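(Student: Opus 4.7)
The plan is to reduce the identity (\ref{eqn:Krein_form}) to the special case $f_s(x)=e^{-isx}$ and then construct the spectral shift function $\xi$ via the classical perturbation determinant, combined with Fourier analysis in the upper half-plane along the lines of \cite{Boy}.

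First, I apply the Fourier-transform theorem of this section together with (the derivation of) Corollary~\ref{cor:pert_Suk} to $f_s(x)=e^{-isx}$: the uniformly Lipschitz quotient $\varphi_{f_s}(\l,\mu)=(e^{-is\l}-e^{-is\mu})/(\l-\mu)$ is bounded by $|s|$, so
\begin{equation*}
e^{-isA}-e^{-isB}=\left(\int_{\R\times\R}\varphi_{f_s}\,d(P_A\ot P_B)_{\cC_1(\cH)}\right)(A-B)\in \cC_1(\cH),
\end{equation*}
with $\|e^{-isA}-e^{-isB}\|_{\cC_1(\cH)}\le |s|\,\|A-B\|_{\cC_1(\cH)}$. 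Let $h(s)=\operatorname{tr}(e^{-isA}-e^{-isB})$, a continuous function on $\R$ vanishing at $0$. Substituting the assumed representation of $f$, interchanging the trace with $\mu$-integration (justified by the bound on $h$), and using $f'(\l)=\int_\R e^{-is\l}\,d\mu(s)$, the formula (\ref{eqn:Krein_form}) reduces to producing $\xi\in L^1(\R)$ such that
\begin{equation*}
\hat\xi(s):=\int_\R e^{-is\l}\xi(\l)\,d\l=\frac{ih(s)}{s},\qquad s\in\R\setminus\{0\}.
\end{equation*}

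To construct $\xi$, I work with the perturbation determinant. For $z\in\C$ with $\operatorname{Im} z\neq 0$, set $\Delta(z)=\det\bigl(I+(A-B)(B-z)^{-1}\bigr)$; this Fredholm determinant is well defined since $(A-B)(B-z)^{-1}\in\cC_1(\cH)$. The function $\Delta$ is analytic and non-vanishing in the upper half-plane and tends to $1$ as $|\operatorname{Im} z|\to\infty$, so $\log\Delta$ has a single-valued branch there satisfying the standard identity $\frac{d}{dz}\log\Delta(z)=-\operatorname{tr}((A-z)^{-1}-(B-z)^{-1})$. Boyadzhiev's Hardy-space argument then provides the a.e.\ limit
\begin{equation*}
\xi(\l):=\frac{1}{\pi}\lim_{\e\downarrow 0}\operatorname{Im}\log\Delta(\l+i\e),
\end{equation*}
the Cauchy representation $\log\Delta(z)=\int_\R \xi(\l)/(\l-z)\,d\l$ on $z$ in the upper half-plane, and thence the resolvent form of Krein's formula, from which $\hat\xi(s)=ih(s)/s$ follows by Fourier-inverting Stone's formula for the one-parameter group $e^{-isA}-e^{-isB}$.

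The main obstacle is the sharp $L^1$-bound $\|\xi\|_1\le \|A-B\|_{\cC_1(\cH)}$ --- precisely the ``bootstrap estimate on eigenvalues of a positive trace class operator'' alluded to in the section preamble. The argument expresses $\Delta(z)$ as a Weierstrass/Fredholm product over the eigenvalues of $(A-B)(B-z)^{-1}$ and combines Jensen's formula in the upper half-plane with the selfadjoint identity $\|A-B\|_{\cC_1(\cH)}=\sum_n|\l_n(A-B)|$ to obtain $\int_\R|\operatorname{Im}\log\Delta(\l+i0)|\,d\l\le\pi\|A-B\|_{\cC_1(\cH)}$. The remaining properties are then read off $\xi$: (a) follows from $\hat\xi(0)=\lim_{s\to 0}ih(s)/s=\operatorname{tr}(A-B)$; (b) is the norm bound just established; (c) holds because $B\le A$ makes $\Delta$ a Nevanlinna function with non-negative imaginary part in the upper half-plane, so $\xi\ge 0$ a.e.; and (d) follows because $(A-B)(B-\l)^{-1}$ extends analytically across $\R$ outside the joint spectrum, so $\log\Delta$ has real boundary values and $\xi$ vanishes there.
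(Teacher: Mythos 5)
Your reduction to $\hat\xi(s)=ih(s)/s$ with $h(s)=\operatorname{tr}(e^{-isA}-e^{-isB})$ is sound and parallels the paper's own first step (the paper likewise derives $\|f(A)-f(B)\|_{\cC_1(\cH)}\le\mu(\R)\|A-B\|_{\cC_1(\cH)}$ from the group-difference bound and Fubini). Where you diverge is the construction of $\xi$: you go through Krein's perturbation determinant $\Delta(z)=\det\bigl(I+(A-B)(B-z)^{-1}\bigr)$ and boundary values of $\operatorname{Im}\log\Delta$, whereas the paper follows Boyadzhiev's route with the harmonic function $h(x,y)=\frac1\pi\operatorname{tr}\bigl[\arctan\bigl(\frac{A-xI}{y}\bigr)-\arctan\bigl(\frac{B-xI}{y}\bigr)\bigr]$, the Cayley-type unitaries $T_A,T_B$ and the rank-one reduction showing $0<h<1$, then Fatou's theorem and an $L^1$-limit over rank-one pieces. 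The divergence itself would be acceptable if your route were complete, but it is not.

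The genuine gap is exactly at the step you label ``the main obstacle.'' For a general trace-class perturbation, $\operatorname{Im}\log\Delta$ is an \emph{unbounded} harmonic function in the upper half-plane, so neither the a.e.\ existence of the boundary limit, nor the absolute continuity of the associated boundary (Herglotz) measure, nor the estimate $\|\xi\|_1\le\|A-B\|_{\cC_1(\cH)}$ comes for free; the paper itself stresses that an ingredient beyond double operator integrals is needed precisely to get absolute continuity. Your proposed fix --- a ``Weierstrass/Fredholm product over the eigenvalues of $(A-B)(B-z)^{-1}$ combined with Jensen's formula'' --- does not deliver this: $\Delta$ has no zeros in the open upper half-plane (both operators are selfadjoint), so Jensen's formula gives no control of $\int_\R|\operatorname{Im}\log\Delta(\l+i0)|\,d\l$, and you cannot even write that integral before absolute continuity of the boundary measure is established. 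Appealing to ``Boyadzhiev's Hardy-space argument'' here is circular, because that argument \emph{is} the missing work: one must first treat the rank-one case $A-B=\a(\cdot,w)w$, where the relevant harmonic function (the argument of $1+\a F(z)$, equivalently the paper's $h$) is bounded between $0$ and $1$, so Fatou's theorem applies and yields $\xi_j$ with $0\le\xi_j\le1$ and $\|\xi_j\|_1\le|\a_j|$, and then pass to the general case by summing the rank-one contributions (multiplicativity of perturbation determinants, or the paper's $A_n=B+\sum_{j\le n}\a_j(\cdot,w_j)w_j$) with convergence $\xi_n\to\xi$ in $L^1(\R)$. Your sketch skips this reduction entirely, so properties b) and the very existence of $\xi\in L^1(\R)$ are unproved; relatedly, your justification of c) (``$\Delta$ is a Nevanlinna function'') is imprecise --- what is true and needed is $\operatorname{Im}\log\Delta\ge0$ eigenvalue by eigenvalue in the rank-one decomposition, not a Herglotz property of $\Delta$ itself --- and your argument for d) gives only integer values of $\xi$ in spectral gaps unless you add the connectivity-to-infinity observation for the complement of the convex hull.
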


\begin{proof} 
It suffices to assume that $A$ and $B$ are hermitian matrices.
The estimate 
\begin{equation}\label{eqn:trbd}
\|f(A)-f(B)\|_{\cC_1(\cH)}\le \mu(\R)\|A-B\|_{\cC_1(\cH)}
\end{equation}follows from the bound of Corollary \ref{cor:pert_Suk}
and the calculation
$$f(A)-f(B) = \frac{i}{2\pi}\int_\R \frac{e^{-isA}-e^{-isB}}{s}\,d\mu(s)$$
obtained from an application of Fubini's theorem with respect to $P_A\ot\mu$ and  $P_B\ot\mu$ on $\R\times [\e,\infty)$ for $\e > 0$.
Then 
$$\hbox{\rm tr}(f(A) - f(B)) = \frac{1}{2\pi}\int_\R \Phi \,d\mu.$$

An expression for the spectral shift function $\xi$ may be obtained from Fatou's Theorem \cite[Theorem 11.24]{Rudin}. Suppose that $\nu$ is a finite measure on $\R$
$$\phi_\nu(z) =\frac{1}{2\pi i}\int_\R \frac{d\nu(\l)}{\l-z},\quad z\in\C\setminus \R,$$ 
is the Cauchy transform of $\nu$. Then $\nu$ is absolutely continuous if
$$\hat\nu(\xi) = \int_\R e^{-i\xi x}(\phi_\nu(x+i0+)-\phi_\nu(x+i0-))\,dx,\quad \xi\in\R.$$
The function $x\longmapsto \phi_\nu(x+i0+)-\phi_\nu(x+i0-)$ defined for almost all $x\in\R$
is then the density of $\nu$ with respect to Lebesgue measure.
For $\nu =\Xi $, if the representation
\begin{align*}
\Phi(s) &= i\frac{\hbox{tr}(e^{-isA }-e^{-isB})}{s}\\
&=\frac{1}{2\pi i}\int_\R e^{-isx}\bigg(\lim_{\e\to0+}\int_0^1\hbox{tr} (V(A+tV-x-i\e)^{-1}-\\ 
&\qquad V(A+tV-x+i\e)^{-1})\,dt\bigg)dx 
\end{align*}
were valid, we would expect that $\xi=\check\Phi$ has the representation
\begin{align}
\xi(s) &=\frac{1}{2\pi i }\lim_{\e\to0+}\int_\R e^{isx-\e|x|} \frac{\hbox{tr}(e^{-ixA }-e^{-ixB})}{x} dx,\quad s\in\R,\nonumber\\
&=\lim_{\e\to0+}\frac1\pi\hbox{tr}\left[\arctan\left(\frac{A-s I}\e\right)-\arctan\left(\frac{B-s I}\e\right)\right],\label{eqn:Krein_rep}
\end{align}
where the arctan function may be expressed as
\begin{equation}\label{eqn:arctanrep}
\arctan t=\frac{1}{2i}\int_\R\frac{e^{ist}-1}{s}e^{-|s|}\,ds,\quad t\in\R .
\end{equation}
For the function defined by
\begin{align*}
h(x,y) &=\frac1\pi\hbox{tr}\left[\arctan\left(\frac{A-x I}y\right)-\arctan\left(\frac{B-x I}y\right)\right]
\end{align*}
we have the bounds
\begin{align*}
\pi |h(x,y)| &\le \left\|\arctan\left(\frac{A-x I}y\right)-\arctan\left(\frac{B-x I}y\right)\right\|_{\cC_1(\cH)}\\
&\le \frac1y\|A-B\|_{\cC_1(\cH)},
\end{align*}
from the bound (\ref{eqn:trbd}) and the representation (\ref{eqn:arctanrep}).
Rewriting
$$h(x,y) =  \frac1{2\pi i}\int_\R e^{-ixs-y|s|}\hbox{tr}\left[\frac{e^{isA}-e^{isB}}{s}\right]ds$$
using (\ref{eqn:arctanrep}), it follows that $h(x,y)$ is harmonic in the upper half-plane 
$$\{(x,y):x\in\R,\ y>0\}.$$

We first look at the case that $A-B = \a (\cdot,w)w$ for $\a > 0$ and $w\in \cH$, $\|w\|=1$, 
so that $A$ is a rank one perturbation of the bounded selfadjoint operator $B$.

If we set
$$X = 2\arctan \frac{A-x}{y}, Y  = 2\arctan \frac{B-x}{y},$$
then $2\pi h = \hbox{tr}(X-Y)$. The formula tr\,$\log(e^{iX}e^{-iY}) = i\hbox{tr}(X-Y)$
follows from the Baker-Campbell-Hausdorff formula for large $y > 0$, see \cite[Lemma 1.1]{Boy}.
Let $T_A = e^{-iX}$, $T_B= e^{-iY}$. Then for $z = x+iy$, 
elementary spectral theory gives
$$
\begin{array}{ll}
T_A&=(A-\overline zI)(A-zI)^{-1} = I + 2iy(A-z I)^{-1},\\
T_B&= (B-\overline zI)(B-zI)^{-1} = I + 2iy(B-z I)^{-1}.
\end{array}
$$
Our aim is to compute tr\,$\log(U)$ for the unitary operator $U = T_A^*T_B$. Because
\begin{align*}
U-I &= T_A^*T_B -T_B^*T_B\\
&= (T_A^* -T_B^*)T_B\\
&=-i2y[(A-\overline z I)^{-1}-(B-\overline z I)^{-1}] T_B,
\end{align*}
we obtain
$$U = I + i2y(A-\overline z I)^{-1}(A-B)(B-z I)^{-1}.$$
Substituting $A-B = \a (\cdot,w)w$ gives
$$U = I + i2y\a(\cdot,(B-\overline z I)^{-1}w)   (A-\overline z I)^{-1}w.$$

The vector $(A-\overline z I)^{-1}w$ is an eigenvector for the unitary operator $U$
with eigenvalue
$$1+ i2y\a((A-\overline z I)^{-1}w,(B-\overline z I)^{-1}w)$$
which can be expressed as $e^{i2\pi\theta(x,y) }$ for a continuous function $\theta$ in the upper half plane
such that $0 < \theta < 1$. Consequently, for large $y > 0$, 
$$i2\pi\theta = \hbox{tr}\log(U) = i\hbox{tr}(X-Y) =i2\pi h.$$
Then $\theta$ is harmonic for large $y > 0$ so it is harmonic
on the upper half plane and it is equal to $h$ there, so $0 < h < 1$.
For matrices, the formula for $h$ is quite explicit and we can proceed by
direct calculation.

By Fatou's Theorem, the boundary values $\xi(x)=\lim_{y\to0+}h(x,y)$ are defined for
almost all $x\in\R$ and satisfy  
$$\lim_{y\to\infty}\pi y h(x,y) = \int_\R\xi(t)\,dt =\|\xi\|_1 \le \|A-B\|_{\cC_1(\cH)}$$
for every $x\in\R$, so in the case that $A-B$ has rank one, formula (\ref{eqn:Krein_rep}) is valid.

For an arbitrary selfadjoint perturbation $V =\sum_{j=1}^\infty\a_j(\cdot,w_j)w_j$ with 
$$\sum_{j=1}^\infty|\a_j| = \|A-B\|_{\cC_1(\cH)} <\infty,$$
the function $\xi_n\in L^1(\R)$ may be defined in a similar fashion for $A_n = B + \sum_{j=1}^n\a_j(\cdot,w_j)w_j$, $n=1,2,\dots$, so that $\xi_n\to \xi$ in $L^1(\R)$
as $n\to\infty$ from which it verified that $\xi = \check\Phi$.
\end{proof}

The representation $\xi = \check\Phi$ obtained above may be viewed as the Fourier transform approach. In the case of a rank one perturbation
$V= \a (\cdot,w)w$, the Cauchy transform approach is developed by B. Simon \cite{Si95}
with the formula
$$\hbox{\rm tr}((A-zI)^{-1} - (B-zI)^{-1}) = -\int_\R  \frac{\xi(\l)}{(\l-z)^2}\,d\l$$
for $z\in\C\setminus[a,\infty)$ for some $a\in\R$, established in \cite[Theorem 1.9]{Si95} by computing
a contour integral. Here the boundary value $\xi(x)=\lim_{y\to0+}h(x,y)$ is expressed as
$$\xi(x) = \frac1\pi \hbox{Arg}(1+\a F(\l+i0+))$$
for almost all $x\in\R$ with respect to the Cauchy transform
$$F(z) = \int_\R \frac{d(P_Bw,w)(\l)}{\l-z},\quad z\in \C\setminus(-\infty,a).$$
The Cauchy transform approach is generalised to type II von Neumann algebras in \cite{ADS}.

Many different proofs of Krein's formula (\ref{eqn:Krein_form}) are available for a wide class of functions $f$, especially
in a form that translates into the setting of noncommutative integration \cite{ADS,PellerI}. As remarked in \cite[p. 163]{BirmSol},
an ingredient additional to double operator integrals (such as complex function theory) 
is needed to show that the measure $\Xi $ is absolutely continuous with respect to Lebesgue measure on $\R$.
Krein's original argument uses perturbation determinants from which follows the representation
Det$(S(\l)) = e^{-2\pi i\xi(\l)}$ for the scattering matrix $S(\l)$ for $A$ and $B$ \cite[Chapter 8]{Y} ,which is still a basic tool of scattering theory.




\printindex
\end{document}